\def\ack{\subsubsection*{Acknowledgment.}}
\theoremstyle{plain}
\newtheorem{thm}{Theorem}[section]
\newtheorem{prop}[thm]{Proposition}
\newtheorem{lem}[thm]{Lemma}
\theoremstyle{definition}
\newtheorem{dfn}{Definition}[section]
\newcommand{\C}{{\mathbb{C}}}
\newcommand{\R}{{\mathbb{R}}}
\newcommand{\Z}{{\mathbb{Z}}}
\newcommand{\N}{{\mathbb{N}}}
\newcommand{\la}{\lambda}
\newcommand{\Si}{\Sigma}
\newcommand{\al}{\alpha}
\newcommand{\ga}{\gamma}
\begin{document}

\title{Spectrum of a Rudin--Shapiro-like sequence}

\author{Lax Chan}
\author{Uwe Grimm}

\address{School of Mathematics and Statistics, The Open University, 
Walton Hall, Milton Keynes MK7 6AA, United Kingdom
Email addresses: \{lax.chan,uwe.grimm\}@open.ac.uk}

\begin{abstract}
  We show that a recently proposed Rudin--Shapiro-like sequence, with
  balanced weights, has purely singular continuous diffraction
  spectrum, in contrast to the well-known Rudin--Shapiro sequence
  whose diffraction is absolutely continuous. This answers a question
  that had been raised about this new sequence.
\end{abstract}

\maketitle

\section{Introduction}

Substitution dynamical systems are widely used as toy models for
aperiodic phenomena in one dimension \cite{PF02}. Crystallographers
are interested in the diffraction spectrum of these systems because it
provides information about the structure of a material
\cite{BG12}. Dworkin~\cite{SD93} showed that the diffraction
spectrum is related to part of the dynamical spectrum, which is the
spectrum of a unitary operator acting on a Hilbert space, as induced
by the shift action. For recent developments regarding the relation
between diffraction and dynamical spectra, we refer to the review
\cite{BL16} and references therein.

The Rudin--Shapiro (RS) sequence \cite{Sha51,Rud59} (in its (balanced)
binary version with values in $\{\pm 1\}$) is a rare example of a
substitution-based system with purely absolutely continuous
diffraction spectrum (while its dynamical spectrum is mixed,
containing the dyadic integers as its pure point part); see
\cite{AS03} for background.  A `Rudin--Shapiro-like' (RSL) sequence
was recently introduced and analyzed in~\cite{PNR15}.  It is defined
as
\begin{equation}\label{eq:rsl}
  \textnormal{RSL}(n)\, =\, (-1)^{\textnormal{inv}^{}_2(n)},
\end{equation}
where $\textnormal{inv}^{}_{2}(n)$ counts the number of occurences of
$10$ (`inversions') as a scattered subsequence in the binary
representation of $n$. In \cite{PNR15}, it is shown that this
sequence exhibits some similar properties as the Rudin--Shapiro
sequence. In particular, this concerns the partial sums
$\varSigma(N):=\sum_{0\leq n\leq N}\textnormal{RSL}(N)$, which are
shown to have the form $\varSigma(N)=\sqrt{N}\,G(\log_{4}N)$, where
$G$ is a function that oscillates periodically between $\sqrt{3}/3$
and $\sqrt{2}$. At the end of \cite{PNR15}, the question is raised
whether this similarity between the two sequences extends to the
property that
\begin{equation}\label{eq:ineq}
  \sup_{\theta\in\R}\left|\sum_{n<N}RSL(n)\, e^{2\pi in\theta}\right| 
   \, \le\, C\, N^{\frac{1}{2}},
\end{equation}
which is satisfied by the Rudin--Shapiro sequence \cite{AL91}, and
which is linked to the purely absolutely continuous diffraction
measure of the balanced RS sequence.

In what follows, we are going to employ a recent algorithm by
Bartlett~\cite{AB14} to show that the Rudin--Shapiro-like sequence
has purely singular continuous diffraction spectrum, pointing to a big
structural difference to the Rudin--Shapiro sequence. In particular,
this will imply that Equation~\eqref{eq:ineq} does \emph{not} hold for the
Rudin--Shapiro-like sequence.

\section{A sketch of Bartlett's algorithm}

By generalizing and developing previous work of Queff\'{e}lec
\cite{MQ10}, Bartlett \cite{AB14} provides an algorithm that
characterizes the spectrum of an aperiodic, constant length
substitution $S$ on $\Z^{d}$. It describes the Fourier coefficients of
mutually singular measures of pure type, giving rise to the maximal
spectral type. Here, we can only give a brief sketch of Bartlett's
algorithm, concentrating on the case of dimension $d=1$.  

We assume that the substitution system is primitive. We first compute
the instruction matrices (or digit matrices) $R_j$, where $j\in[0,q)$
and $q$ is the length of the substitution (which will be $q=2$ in our
case). These matrices encode the letters that appear at the $j$-th
position of the image of the substitution system; we shall show this
for the explicit example of the Rudin--Shapiro-like sequence
below. The substitution matrix $M_S$ is given by the sum of the
instruction matrices.

Due to primitivity, the Perron--Frobenius theorem
\cite[Thm.~2.2]{BG13} ensures that the eigenvector to the leading
eigenvalue of $M_S$ can be chosen to have positive entries only. We
denote this vector, after normalizing it to be a probability vector,
by $u$. Note that $u=(u_{\ga})_{\ga\in\mathcal{A}}$ determines a point
counting measure as it counts how frequently each letter $\ga$ in the
alphabet $\mathcal{A}$ appears asymptotically. One then applies the
following lemma~\cite{JP86} to verify aperiodicity. Another property
that is used is the so-called height of the substitution $S$, which
can be calculated using \cite[Def.~6.1]{MQ10}.

\begin{lem}[Pansiot's Lemma]
  A primitive\/ $q$-substitution\/ $S$ which is one-to-one on\/
  $\mathcal{A}$ is aperiodic if and only if\/ $S$ has a letter with at
  least two distinct neighbourhoods.
\end{lem}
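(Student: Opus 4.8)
The plan is to prove both implications by contraposition, working throughout inside the two-sided subshift $X$ generated by $S$, which is minimal by primitivity. After replacing $S$ by a suitable power we may assume that $S$ fixes some point $u=u_0u_1u_2\dots$; this affects neither $X$, nor the one-to-one property (a power of a substitution that is one-to-one on $\mathcal{A}$ is again one-to-one on $\mathcal{A}$, since constant-length images of distinct words of equal length are distinct), nor the neighbourhood data of a letter --- the neighbourhood of $a\in\mathcal{A}$ being read off from $u$ as the collection of pairs $(x,y)\in\mathcal{A}\times\mathcal{A}$ with $xay$ a factor of $u$, and ``two distinct neighbourhoods'' meaning that $a$ occurs in two distinct such contexts. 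Recall finally that, for a primitive substitution, aperiodicity means precisely that $X$ is infinite, equivalently that $X$ contains no periodic point.

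For the implication ``$S$ aperiodic $\Rightarrow$ some letter has at least two neighbourhoods'' I would argue the contrapositive. If every $a\in\mathcal{A}$ occurs in only one context, then in particular there is a map $r\colon\mathcal{A}\to\mathcal{A}$ with $u_{n+1}=r(u_n)$ for all $n$, so $u$ is the forward $r$-orbit of $u_0$; since $\mathcal{A}$ is finite this orbit is eventually periodic, and an eventually periodic point of a minimal system is periodic, whence $X$ is finite and $S$ is not aperiodic. The one-to-one hypothesis is not needed for this direction.

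The real content is the reverse implication, where I would again pass to the contrapositive: assuming $S$ primitive, one-to-one on $\mathcal{A}$, and $X$ periodic, I would show that every letter occurs in a unique context. Let $p$ be the period of $X$, so that $u=w^{\infty}$ with $w$ of minimal length $p$. Since $S$ is a morphism, $S(w)^{\infty}=S(u)=u=w^{\infty}$, which forces $S(w)=w^{q}$; comparing blocks of length $q$ in the identity $u=S(u)$ then shows that $S(u_i)$ is exactly the factor of $w^{\infty}$ of length $q$ starting at position $qi\bmod p$. After passing to one further power of $S$ I may assume $q\ge p$, the point being the elementary fact that, as $p$ is the minimal period of $w^{\infty}$, any factor of $w^{\infty}$ of length at least $p$ determines the residue modulo $p$ of each of its occurrences (equivalently, the $p$ cyclic shifts of $w$ are pairwise distinct). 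Now fix a letter $a$ and two of its occurrences $i$ and $i'$ in $u$. Then $S(u_i)=S(a)=S(u_{i'})$, so the length-$q$ factors of $w^{\infty}$ at positions $qi$ and $qi'$ coincide; by the observation just made this forces $qi\equiv qi'\pmod p$, hence also $q(i\pm1)\equiv q(i'\pm1)\pmod p$, so that $S(u_{i\pm1})$ and $S(u_{i'\pm1})$ are the same factor of $w^{\infty}$. Thus $S(u_{i\pm1})=S(u_{i'\pm1})$, and one-to-oneness of $S$ on $\mathcal{A}$ yields $u_{i\pm1}=u_{i'\pm1}$. Hence all occurrences of $a$ sit in the same context, as required.

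I expect the main difficulty to be this second implication, and within it the two reductions to powers of $S$ together with the combinatorial fact that sufficiently long factors of a periodic word pin down their positions modulo the period; granting these, the one-to-one assumption does exactly the expected job of pulling equalities of substitution images back to equalities of letters. A minor additional point meriting care is the distinction between the one-sided fixed point and the two-sided subshift near position $0$, which is why it is cleanest to phrase everything inside $X$ from the outset.
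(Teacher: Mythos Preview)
The paper does not actually prove Pansiot's Lemma; it is quoted with a citation to \cite{JP86} and used only as a black box to verify aperiodicity of the Rudin--Shapiro-like substitution. There is therefore no in-paper argument to compare your attempt against.

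That said, your argument is essentially correct and follows the natural line. The easy direction is standard and, as you observe, does not use injectivity. For the harder direction your two reductions are legitimate: powers of a constant-length substitution injective on $\mathcal{A}$ remain injective on $\mathcal{A}$, and neither the subshift $X$ nor the neighbourhood data of a letter changes under passing to a power. The core combinatorial step --- that a factor of $w^{\infty}$ of length at least the minimal period $p$ determines its position modulo $p$, because the $p$ cyclic rotations of a primitive word are pairwise distinct --- is precisely what lets you propagate $qi\equiv qi'\pmod{p}$ to $q(i\pm 1)\equiv q(i'\pm 1)\pmod{p}$ and then invoke injectivity of $S$ on letters.

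The only point deserving slightly more care is the one you already flag: you introduce $u=u_0u_1u_2\dots$ as a one-sided fixed point yet speak of $u_{i-1}$, while phrasing minimality in terms of the two-sided hull $X$. This is harmless here, since by periodicity every letter of $u$ recurs at infinitely many positions $\ge 1$, and the one-sided and two-sided hulls share the same language; but it would be cleaner either to work with a genuine two-sided fixed point of a power of $S$, or to phrase the whole argument at the level of the language of $u$ rather than at specific positions.
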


Bartlett's algorithm employs the bi-substitution of the substitution $S$,
which is defined as follows.

\begin{dfn}
  Let $S$ be a $q$-substitution on the alphabet $\mathcal{A}$. The
  substitution product $S\otimes S$ is a $q$-substitution on
  $\mathcal{A}\mathcal{A}$ (the alphabet formed by all pairs of
  letters in $\mathcal{A}$) with configuration $R\otimes R$ whose
  $j$-th instruction is the map
\[
    (R\otimes R)_{j}\!:\, \mathcal{A}\mathcal{A}\longrightarrow 
     \mathcal{A}\mathcal{A}\quad\text{with}\quad
    (R\otimes R)_{j}\!:\,\al\ga\longmapsto R_j(\al)R_j(\ga).
\]
The substitution $S\otimes S$ is called the \emph{bi-substitution} of $S$.
\end{dfn}

The Fourier coefficients $\widehat{\Si}$ of the correlation measures
$\Si$ can then be obtained using following theorem of
Bartlett~\cite{AB14}.

\begin{thm}
\label{thm:AB1}
Let\/ $S$ be an aperiodic\/ $q$-substitution on\/ $\mathcal{A}$. Then, for
$p\in\N$, we have
\[
   \widehat{\Si}(k)\, =\, \frac{1}{q^{p}}\sum_{j\in[0,q^{p})}R_{j}^{p}\otimes 
   R_{j+k}^{p}\,\widehat{\Si}\lfloor j+k\rfloor_{p} \, =\, 
   \lim_{n\to\infty}\frac{1}{q^{n}}\sum_{j\in[0,q^{n})}R_{j}^{n}\otimes 
   R_{j+k}^{n}\,\widehat{\Si}(0),
\] 
where\/ $\lfloor j+k\rfloor_{p}$ is the quotient of\/ $j+k$ under
division modulo\/ $q^{p}$. Here $R_j\otimes R_{j+k}$ is the Kronecker
product of the instruction matrices at position $j$ and $j+k$.
\end{thm}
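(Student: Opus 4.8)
The plan is to follow Queff\'{e}lec's strategy in the form systematised by Bartlett: extract a one-step renormalisation for the pair-correlation data from the self-similarity $S(w)=w$ of a fixed point $w$, iterate it $p$ times to obtain the first identity, and then let $p=n\to\infty$ to obtain the second. For the base case, recall that $\widehat{\Si}(k)=\bigl(\widehat{\Si}_{\al\ga}(k)\bigr)_{\al\ga\in\mathcal{A}\mathcal{A}}$ records, for each ordered pair of letters, the asymptotic frequency of that pair at lag $k$ along $w$; by primitivity the substitution subshift is uniquely ergodic, so these frequencies exist and do not depend on the averaging window. Writing a position $a=qm+j$ with $j\in[0,q)$, the identities $w_a=R_j(w_m)$ and $w_{a+k}=R_{j+k}\bigl(w_{m+\lfloor j+k\rfloor_1}\bigr)$ — reading $R_{j+k}$ as $R_{(j+k)\bmod q}$ — rewrite every lag-$k$ pair in $w$ as a pair at the coarser lag $\lfloor j+k\rfloor_1$. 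Counting such occurrences in a window of length $qN$, grouping them according to the pair $\bigl(w_m,w_{m+\lfloor j+k\rfloor_1}\bigr)$ at the coarse scale, dividing by $qN$ (the extra factor $1/q$ being the change of scale), and letting $N\to\infty$ gives
\[
 \widehat{\Si}(k)\,=\,\frac1q\sum_{j\in[0,q)}R_j\otimes R_{j+k}\,\widehat{\Si}\lfloor j+k\rfloor_1 ,
 \qquad (R_j\otimes R_{j'})_{(\al\ga),(\beta\delta)}=(R_j)_{\al\beta}(R_{j'})_{\ga\delta};
\]
equivalently one first establishes the corresponding self-similarity of the matrix-valued correlation measure $\Si$ under $t\mapsto qt$ and reads off Fourier coefficients.

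Next I would iterate this relation $p-1$ further times. Applying it to the term $\widehat{\Si}\lfloor j+k\rfloor_1$ on the right, then again, and so on, and using the semigroup law $R^{p}_{j}=R_{j_0}\,R^{p-1}_{\lfloor j\rfloor_1}$ for $j_0=j\bmod q$ — together with the same composition rule for the shifted index, which is precisely schoolbook base-$q$ addition of $k$ to $j$ — one re-indexes the resulting $q^{p}$-fold sum by $j\in[0,q^{p})$ and arrives at the first displayed equality of the theorem. The only genuine thing to verify here is that after $p$ steps the accumulated coarse lag is exactly the quotient $\lfloor j+k\rfloor_p$ of $j+k$ modulo $q^{p}$, and that the accumulated instruction matrices assemble into $R^p_j\otimes R^p_{j+k}$. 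This is conceptually routine, but the base-$q$ carry bookkeeping is fiddly, and I expect it to be the main obstacle to a clean write-up.

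Finally, the limit is a crude estimate. Fix $k$ and take $p=n$ with $q^{n}>k$; then $\lfloor j+k\rfloor_n=0$ for $j\in[0,q^{n}-k)$, while $\lfloor j+k\rfloor_n=1$ for the remaining $k$ values of $j$, so
\[
 \widehat{\Si}(k)\,=\,\frac1{q^{n}}\sum_{j\in[0,q^{n})}R^n_j\otimes R^n_{j+k}\,\widehat{\Si}(0)\,+\,E_n ,
\]
where $E_n$ is a sum of at most $2k$ terms of the form $\pm q^{-n}\,R^n_j\otimes R^n_{j+k}\,\widehat{\Si}(\varepsilon)$ with $\varepsilon\in\{0,1\}$. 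Each instruction matrix is a $0/1$ matrix with unit column sums, hence so is every product $R^n_j$ and every Kronecker product $R^n_j\otimes R^n_{j+k}$; since $\widehat{\Si}(\varepsilon)$ is a fixed probability vector, $\|E_n\|\le 2k/q^{n}\to 0$. Therefore the limit on the right-hand side exists and equals $\widehat{\Si}(k)$, which is the second identity. (Aperiodicity beyond the standing primitivity assumption plays no role in this identity as such; it is what later guarantees that these correlation measures genuinely capture the maximal spectral type, which is the use made of the theorem in the sequel.)
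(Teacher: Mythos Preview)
The paper does not prove this theorem: it is quoted verbatim as a result of Bartlett~\cite{AB14} (building on Queff\'{e}lec~\cite{MQ10}) and used as a black box in the subsequent application to the RSL sequence. There is therefore no ``paper's own proof'' to compare against.

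That said, your sketch is essentially the argument one finds in the cited sources. The one-step renormalisation you derive from $w_{qm+j}=R_j(w_m)$ and $w_{qm+j+k}=R_{(j+k)\bmod q}(w_{m+\lfloor j+k\rfloor_1})$, together with unique ergodicity from primitivity, is exactly how the recursion is obtained; iterating (or equivalently applying the same step to $S^{p}$, a $q^{p}$-substitution whose instruction matrices are the $R^{p}_{j}$) gives the first identity, and your boundary estimate for the limit is correct: for $q^{n}>k$ only the $k$ indices $j\ge q^{n}-k$ have carry $\lfloor j+k\rfloor_n=1$, the matrices $R^{n}_{j}\otimes R^{n}_{j+k}$ are column-stochastic $0/1$ matrices, and $\widehat{\Si}(0),\widehat{\Si}(1)$ are probability vectors, so the discrepancy is $O(k/q^{n})$. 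Your closing remark that aperiodicity is not actually used in this identity---only primitivity is---is also accurate; aperiodicity enters later in Bartlett's framework when relating the correlation measures to the maximal spectral type.
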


Together with the above theorem and Michel's
lemma~\cite[Thm.~2.1]{AB14}, we have
\[
    \widehat{\Si}(0)\, =\, \sum_{\ga\in\mathcal{A}}u\cdot e_{\ga\ga},
\] 
where in general $e_{\alpha\beta}$ is the standard unit vector in
$\C^{\mathcal{A}^{2}}$ corresponding to the word $\alpha\beta$.
Define the $p$-th carry set to be $\Delta_p(k):=\{j\in
[0,q^{p}):j+k\neq [0,q^{p})\}$. As a consequence of the above theorem,
we have the following expression,
\begin{equation}
\label{equation:1}
  \widehat{\Si}(1)\, =\, 
  \left(qI-\sum_{j\in\Delta_1(1)}R_j\otimes R_{j+1}\right)^{-1}
  \sum_{j\notin\Delta_1(1)} R_j\otimes R_{j+1}\, \widehat{\Si}(0).
\end{equation}

We then use the following proposition~\cite[Prop.~2.2]{AB14} to
compute the bi-substitution and to partition the alphabet into its
ergodic classes and a transient part.

\begin{prop}
\label{prop:AB2}
Let\/ $S$ be a substitution of constant length on\/
$\mathcal{A}$. Then there is an integer\/ $h>0$ and a partition of the
alphabet\/ $\mathcal{A}=E_1\sqcup\cdots\sqcup E_k\sqcup T$ so that
\begin{enumerate}
\item[(i)] $S^{h}\!:\, E_{j}\to E_{j}^{+}$ is primitive for each\/ $1\leq j\leq K$,
\item[(ii)] $\ga\in T$ implies\/ $S^{h}(\ga)\notin T^{+}$,
\end{enumerate}
where\/ $\sqcup$ denotes the disjoint union, $E_j$ its ergodic classes
and\/ $T$ the transient part. $E_{j}^{+}$ and $T^{+}$ are the words
formed by elements of the ergodic classes and transient part,
respectively.
\end{prop}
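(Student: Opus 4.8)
The plan is to read the decomposition off the combinatorial digraph of $S$. First I would form the directed graph $G$ on the vertex set $\mathcal{A}$ that has an edge $\al\to\ga$ precisely when the letter $\ga$ occurs in the word $S(\al)$, i.e.\ the support graph of the substitution matrix $M_S$. Since $S$ has constant length, every vertex of $G$ has at least one outgoing edge, so no single vertex is a sink.

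Next I would decompose $G$ into strongly connected components and pass to the condensation, a finite directed acyclic graph on the set of components. Call a component \emph{terminal} if it emits no edge in the condensation. Since the condensation is a finite DAG, terminal components exist, every vertex of $G$ reaches at least one of them, and (as no vertex is a sink) each terminal component carries a cycle, so the restriction of $M_S$ to it is irreducible. I would take the terminal components to be the ergodic classes and let $T$ be the union of the remaining components; this is the recurrent/transient splitting of $M_S$ written in substitution language. If genuine primitivity rather than mere irreducibility is wanted, each terminal component of period $d$ is further refined into its $d$ cyclic subclasses, which only increases the number of pieces and is why the count $K$ of ergodic classes can exceed the number $k$ of terminal components.

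Property (i) is then essentially closedness: for $\ga\in E_j$, every letter of $S(\ga)$ is reachable from $\ga$ and so lies in $E_j$, whence $S^{h}(E_j)\subseteq E_j^{+}$ for all $h$; since $M_S|_{E_j}$ is irreducible, taking $h$ a common multiple of the periods of the terminal components makes the restricted power matrix primitive on each (cyclic sub)class. For property (ii), each $\ga\in T$ has a path in $G$ of some length $\ell\le|\mathcal{A}|$ to a vertex $\beta$ in some $E_j$; then $\beta$ occurs in $S^{\ell}(\ga)$, and since $S^{h-\ell}(\beta)\subseteq E_j^{+}\subseteq(E_1\cup\dots\cup E_k)^{+}$ whenever $h\ge\ell$, the word $S^{h}(\ga)$ contains a non-transient letter, so $S^{h}(\ga)\notin T^{+}$. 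It remains to choose a single $h$ serving both conditions: any common multiple of the periods of the terminal components that also exceeds every distance appearing in (ii) works, as does any larger such multiple. I expect this coordination of $h$ to be the only delicate point — one exponent must at once make every ergodic block primitive (which ties $h$ to the periods and the cyclic refinement) and drive every transient letter into an ergodic class; the rest is finite bookkeeping on $G$.
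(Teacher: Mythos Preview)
The paper does not itself prove this proposition; it is quoted verbatim from Bartlett~\cite[Prop.~2.2]{AB14} as a black-box tool in the algorithm, so there is no in-paper argument to compare your sketch against.

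That said, your sketch is correct and is exactly the standard argument one would expect in the cited source: the partition is nothing other than the recurrent/transient decomposition of the non-negative matrix $M_S$, read off the strongly connected components of its support digraph, with the ergodic classes taken to be the cyclic subclasses of the terminal components so that a common power becomes primitive on each. Your treatment of the single exponent $h$ is also right --- any common multiple of the periods of the terminal components that is at least $|\mathcal{A}|$ (so that every transient letter has reached a recurrent one) does the job, and larger multiples work as well. One cosmetic point: the fact that every vertex has an outgoing edge needs only that $S(\alpha)$ is non-empty, not that $S$ has constant length, but of course constant length $q\ge 1$ implies this.
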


We define the \textit{spectral hull} $K(S)$ of a $q$-substitution to
be 
\[
  K(S)\, :=\, \{v\in\C^{\mathcal{A}^{2}}: C_{S}^{t}v=qv \text{ and }
v\geq 0\},
\] 
and denote the \textit{extreme rays} of $K(S)$ by $K^{*}$. Here,
$C_S=\sum_{j}R_j\otimes R_j$, the sum of the Kronecker product of the
instruction matrices at each position $j$. Using the following lemma
of Bartlett~\cite{AB14} and enforcing strong semi-positivity, we 
obtain the extreme rays $K^{*}$ of the spectral hull $K(S)$. Here,
we use the notation
$\vec{E}:=\sum_{\ga\delta\in E}e_{\ga\delta}\in\C^{\mathcal{A}^{2}}$.

\begin{lem}
\label{lem:AB3}
A vector\/ $v\in\mathbb{C}^{\mathcal{A}^{2}}$ satisfies\/ $v\in K(S)$
if and only if
\[
  v\, =\, V+P_{T}(QI-P_{T}C_{S}^{t})^{-1}P_{T}C_S^{t}V 
  \quad\text{and}\quad v\geq 0,
\]
where\/ $V=\sum_{j}w_j\vec{E}_{j}$ with\/ $w_j\in\C$, and where\/ $P_{T}$
is the standard projection onto the transient pairs\/ $T$ of\/
$\mathcal{A}^{2}$.
\end{lem}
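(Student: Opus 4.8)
The plan is to work out the entire $Q$-eigenspace of $C_S^{t}$, where $Q$ denotes the length of the substitution (replacing $S$ by the power $S^{h}$ supplied by Proposition~\ref{prop:AB2} if necessary), and then to intersect it with the nonnegative cone. The starting point is that the ergodic decomposition $\mathcal{A}^{2}=E_1\sqcup\cdots\sqcup E_K\sqcup T$ puts $C_S$ in block-triangular form: a pair of letters lying in an ergodic class $E_j$ has all of its bi-substitution descendants again in $E_j$, while a transient pair may feed into the ergodic classes. Ordering the coordinates of $\mathbb{C}^{\mathcal{A}^{2}}$ as $(T,E_1,\dots,E_K)$, this makes $C_S$ block lower triangular with diagonal blocks $C_{TT}$ and $C_{E_1E_1},\dots,C_{E_KE_K}$, so that $C_S^{t}$ is block upper triangular; I would record this at the outset by reading it off from parts (i) and (ii) of Proposition~\ref{prop:AB2}.

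I would then treat the two kinds of diagonal block. On an ergodic class, $C_{E_jE_j}$ is the Kronecker incidence matrix of the (powered) substitution restricted to the primitive sub-alphabet $E_j$; since the substitution has constant length, every column sum of $C_{E_jE_j}$ equals $Q$, so the all-ones vector $\vec{E}_j=\sum_{\gamma\delta\in E_j}e_{\gamma\delta}$ is a strictly positive $Q$-eigenvector of $C_{E_jE_j}^{t}$. By the Perron--Frobenius theorem this forces $Q$ to be the simple leading eigenvalue of $C_{E_jE_j}^{t}$, whence its $Q$-eigenspace is exactly $\mathbb{C}\,\vec{E}_j$. For the transient block I would use part (ii) of Proposition~\ref{prop:AB2}: every transient pair produces a non-transient pair under $S^{h}$, so in the Kronecker incidence matrix of $S^{h}$ each column indexed by $T$ loses at least one unit of mass to the ergodic rows; since a bi-substitution path from $T$ to $T$ never leaves $T$, that $T\to T$ block coincides with $C_{TT}^{h}$, and hence $\rho(C_{TT})<Q$, so $QI-C_{TT}^{t}$ is invertible.

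With these two inputs the lemma becomes linear algebra. Writing $v=(v_T,v_{E_1},\dots,v_{E_K})$, the equation $C_S^{t}v=Qv$ splits along the block upper-triangular structure into the ergodic equations $C_{E_jE_j}^{t}v_{E_j}=Qv_{E_j}$ and the transient equation, which rearranges to $(QI-C_{TT}^{t})v_T=P_TC_S^{t}V$ with $V$ the part of $v$ supported on the ergodic classes. The ergodic equations force $v_{E_j}=w_j\vec{E}_j$ with $w_j\in\mathbb{C}$, i.e.\ $V=\sum_j w_j\vec{E}_j$, and then invertibility of $QI-C_{TT}^{t}$ forces $v_T=(QI-C_{TT}^{t})^{-1}P_TC_S^{t}V=P_T(QI-P_TC_S^{t})^{-1}P_TC_S^{t}V$, so $v=V+P_T(QI-P_TC_S^{t})^{-1}P_TC_S^{t}V$. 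Running the same block identities backwards shows that every $v$ of this shape satisfies $C_S^{t}v=Qv$, and adjoining the constraint $v\ge0$ (which in particular makes the $w_j$ nonnegative reals) then characterises $v\in K(S)$ exactly as claimed. The only point that is not pure bookkeeping is the strict spectral estimate $\rho(C_{TT})<Q$ underpinning the resolvent $(QI-P_TC_S^{t})^{-1}$; the rest is reading off and re-assembling the block equations.
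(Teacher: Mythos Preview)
The paper does not give its own proof of this lemma; it is quoted from Bartlett~\cite{AB14} and only \emph{applied} in the body of the paper, so there is no in-paper argument to compare against. Your proposal is nonetheless a correct proof, and it is the expected one: exploit the block upper-triangular shape of $C_S^{t}$ coming from the ergodic decomposition of $\mathcal{A}^{2}$, use Perron--Frobenius on each primitive ergodic block to pin down the $Q$-eigenspace there as $\mathbb{C}\,\vec{E}_j$, and use the strict spectral gap $\rho(C_{TT})<Q$ on the transient block to invert $QI-P_TC_S^{t}$ and solve for the transient coordinates. One small point worth tightening: when you pass to the power $S^{h}$ to invoke Proposition~\ref{prop:AB2}, you should note explicitly that the reachability relation---and hence the block structure and the partition $E_1\sqcup\cdots\sqcup E_K\sqcup T$---is the same for $S$ and $S^{h}$, so that the triangular form and the spectral estimates you derive for the power transfer back to $C_S$ itself; otherwise the reader might worry that you have characterised $K(S^{h})$ rather than $K(S)$.
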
 

Finally, the \emph{maximal spectral type} is given by
\begin{equation}\label{eq:mst}
   \sigma_{\textnormal{max}}\, \sim\, \omega_q*\sum_{w\in
  K^{*}}\la_{w},
\end{equation}
where $\omega_q$ is a probability measure supported by the $q$-adic
roots of unity. For each $w\in K^{*}$, we compute
\[
  \widehat{{\la}_{w}}(k)\, =\, w\widehat{\Si}(k).
\]
If $\widehat{{\la}_{w}}(k)$ is periodic in $k$, then $\la_{w}$ is a
pure point measure, if $\widehat{\la_{w}}(k)=0$ for all $k\neq 0$,
then $\la_w$ is Lebesgue measure. Otherwise, $\la_{w}$ is purely
singular continuous. Thus, the maximal spectral type is completely
characterized by this algorithm.

\section{The Rudin--Shapiro-like sequence}

The Rudin--Shapiro-like sequence of \cite{PNR15} can be described by
the following substitution rule
\begin{equation}\label{eq:subst}
  S^{}_{\text{RSL}^{}}\!:\; 0\mapsto 01,\quad 1\mapsto 20,\quad 2\mapsto 13,
  \quad 3\mapsto 32,
\end{equation}
on four letters. This is similar to the Rudin--Shapiro case, where the
binary sequence is also obtained from a four-letter substitution rule,
after applying a reduction map. We apply the recoding $0,1\to +1$ and
$2,3\to -1$. Both letters $\pm 1$ then are equally frequent, so 
we are in the balanced weight case.

In the remaining of this article, we are going to apply Bartlett's
algorithm to prove the following result.

\begin{thm}
  The (balanced weight) sequence $S^{}_{\text{RSL}^{}}$ has
  purely singular continuous diffraction spectrum.
\end{thm}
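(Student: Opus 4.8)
The plan is to run Bartlett's algorithm, as sketched above, on the substitution $S^{}_{\text{RSL}^{}}$ in \eqref{eq:subst}, and to read off the diffraction of the balanced sequence from the resulting maximal spectral type. Recording the first and second letter of the four images, the instruction matrices $R_0,R_1$ are the $4\times4$ permutation matrices of the permutations $(1\,2)$ and $(0\,1)(2\,3)$ of $\mathcal{A}=\{0,1,2,3\}$, so that every column of $M_S=R_0+R_1$ sums to $2$; a short orbit computation, $0\mapsto01\mapsto0120\mapsto01201301\mapsto\cdots$, shows that $M_S$ is primitive with Perron--Frobenius eigenvalue $2$ and normalised eigenvector $u=\tfrac14(1,1,1,1)$. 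The four images $01,20,13,32$ are pairwise distinct, so $S$ is one-to-one on $\mathcal{A}$; in the fixed point the letter $0$ occurs in the three distinct two-sided neighbourhoods $2\,0\,1$, $3\,0\,1$ and $2\,0\,3$, so Pansiot's Lemma gives aperiodicity; and, although in short prefixes $0$ occurs only at positions divisible by $3$, further along the fixed point it occurs at a position congruent to $1$ modulo $3$ as well, so the height of $S$ equals $1$.

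Next I would pass to the bi-substitution $S\otimes S$ on the $16$-letter alphabet $\mathcal{A}^2$, with configuration matrices $R_j\otimes R_j$ and $C_S=R_0\otimes R_0+R_1\otimes R_1$. Since $S$ is bijective, the bi-substitution has no transient part, and the diagonal action of the group $\langle R_0,R_1\rangle$ --- which is the dihedral group of order~$8$ --- splits $\mathcal{A}^2$ into the ergodic classes $E_1=\{00,11,22,33\}$, $E_2=\{03,30,12,21\}$ and $E_3$ (the remaining eight pairs), as in Proposition~\ref{prop:AB2}. By Michel's Lemma and Theorem~\ref{thm:AB1}, $\widehat{\Si}(0)=\sum_{\ga\in\mathcal{A}}u\cdot e_{\ga\ga}=\tfrac14\,\vec{E}_1$, and, since the carry set is $\Delta_1(1)=\{1\}$, formula~\eqref{equation:1} specialises to
\[
  \widehat{\Si}(1)\,=\,\bigl(2I-R_1\otimes R_0\bigr)^{-1}(R_0\otimes R_1)\,\widehat{\Si}(0),
\]
an explicit vector of $\C^{\mathcal{A}^2}$ (the matrix $2I-R_1\otimes R_0$ is invertible because $R_1\otimes R_0$ is a permutation matrix). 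Iterating the first identity of Theorem~\ref{thm:AB1} along the binary digits of $k$ then expresses $\widehat{\Si}(k)$, for every $k$, as a product of the matrices $R_j\otimes R_{j+k}$ applied to these two seed vectors.

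With the transient part empty, Lemma~\ref{lem:AB3} reduces the spectral hull $K(S)=\{v\ge0:C_S^{t}v=2v\}$ to the cone generated by $\vec{E}_1,\vec{E}_2,\vec{E}_3$, from which the extreme rays $K^{*}$ are obtained after enforcing strong semi-positivity; for each $w\in K^{*}$ I would compute $\widehat{\la_w}(k)=w\,\widehat{\Si}(k)$ from the recursion and classify it via \eqref{eq:mst}. The diffraction $\gamma$ of the balanced sequence is the spectral measure of the recoding $\epsilon\colon0,1\mapsto+1$, $2,3\mapsto-1$, whose autocorrelation coefficients are $\eta(k)=w_{\epsilon}\,\widehat{\Si}(k)$ with $w_{\epsilon}\in\C^{\mathcal{A}^2}$ the vector of entries $\epsilon(\al)\epsilon(\ga)$. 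Since $\epsilon$ is balanced, $w_{\epsilon}$ is orthogonal to the all-ones vector $\mathbf{1}$, whose associated measure is the point mass $\delta_0$ (because $\mathbf{1}\,\widehat{\Si}(k)\equiv1$) and which carries the pure point factor $\omega_2$; thus $\gamma$ is governed by the non-trivial part of the maximal spectral type, and it remains to classify that part.

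The main obstacle is exactly this classification: one must show that, for each extreme ray $w$ entering $\gamma$, the sequence $k\mapsto w\,\widehat{\Si}(k)$ is (i) not eventually periodic, which rules out a pure point (Bragg) contribution, and (ii) not identically zero for $k\ne0$, which rules out a Lebesgue (absolutely continuous) contribution. Part (ii) I expect to reduce to the single explicit value $\eta(1)=w_{\epsilon}\,\widehat{\Si}(1)\ne0$; the corresponding value vanishes for the Rudin--Shapiro sequence, whose diffraction is exactly Lebesgue, so this one computation is precisely what separates the two cases. Part (i) is the delicate half: here $\widehat{\Si}(k)$ is generated by the non-commuting cocycle $R_j\otimes R_{j+k}$ read along the binary expansion of $k$, which takes values in the non-abelian group $D_4$ rather than in the abelian group responsible for the flat spectrum of Rudin--Shapiro, and one has to exhibit an explicit subsequence of $k$ --- for instance $k=2^{n}-1$, or $k$ with eventually periodic binary expansion --- along which $w\,\widehat{\Si}(k)$ assumes infinitely many values, equivalently to verify that the finite-dimensional cocycle driving $\widehat{\Si}$ admits no invariant producing periodicity. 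Once (i) and (ii) are established, \eqref{eq:mst} shows that $\gamma$ is purely singular continuous, and in particular that \eqref{eq:ineq} fails for the Rudin--Shapiro-like sequence.
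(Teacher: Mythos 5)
Your setup follows the paper's route exactly --- Bartlett's algorithm, the same instruction matrices, primitivity, Pansiot's lemma, trivial height, the ergodic classes $E_1,E_2,E_3$ with empty transient part, and the seeds $\widehat{\Si}(0)=\tfrac14\vec{E}_1$ and $\widehat{\Si}(1)=(2I-R_1\otimes R_0)^{-1}(R_0\otimes R_1)\widehat{\Si}(0)$ --- and all of that is correct. But the proof is not completed: the classification step, which is the entire content of the theorem, is left as a programme. Two concrete gaps. First, you never determine the extreme rays $K^*$. Note that ``$v\ge 0$'' in the spectral hull means strong semi-positivity of $v$ viewed as a $4\times4$ matrix (positive semidefiniteness), not entrywise nonnegativity, so the cone is \emph{not} generated by $\vec E_1,\vec E_2,\vec E_3$; diagonalising $v=w_1\vec E_1+w_2\vec E_2+w_3\vec E_3$ and imposing nonnegative eigenvalues yields the three rays $v_1=\vec E_1+\vec E_2+\vec E_3=\mathbf 1$, $v_2=\vec E_1+\vec E_2-\vec E_3$ and $v_3=\vec E_1-\vec E_2$, and the subsequent analysis has to be run against $v_2$ and $v_3$, not against the $\vec E_j$. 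Second, and more seriously, you correctly identify that one must rule out both a Lebesgue and a pure point contribution, you do not carry out either verification, and for the pure point exclusion you propose the hard road (exhibiting non-periodicity of the cocycle $k\mapsto w\,\widehat{\Si}(k)$ directly), which you yourself flag as ``the delicate half'' and leave open. That road is unnecessary: you already computed that the height is trivial, and Dekking's theorem \cite[Thm.~5.6]{AB14} then says the pure point part of the maximal spectral type is exhausted by $\omega_2*\la_{v_1}=\omega_2$, so $\la_{v_2}$ and $\la_{v_3}$ cannot be pure point --- no cocycle analysis is needed. This is precisely how the paper closes the argument.

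For the Lebesgue exclusion your plan is sound but must actually be executed: computing $\widehat{\Si}(1)=\bigl(0,\tfrac16,0,\tfrac1{12},0,0,\tfrac1{12},\tfrac16,\tfrac16,\tfrac1{12},0,0,\tfrac1{12},0,\tfrac16,0\bigr)$ one finds $v_2\cdot\widehat{\Si}(1)=v_3\cdot\widehat{\Si}(1)=-\tfrac13\ne 0$, so neither $\la_{v_2}$ nor $\la_{v_3}$ is Lebesgue measure, and combined with the height argument both are purely singular continuous; since the balanced weight vector $w_\epsilon$ is orthogonal to $\mathbf 1$, the diffraction sees only these two components. As it stands, your submission is a correct and well-organised plan of the paper's proof rather than a proof: the one numerical check that separates RSL from RS is asserted but not performed, and the pure point exclusion is explicitly deferred to an argument you do not supply.
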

\begin{proof}
  The instruction matrices and the substitution matrix can be read off
  from the substitution rule of Equation~\eqref{eq:subst} and are given by
\[
  R_0= 
\begin{pmatrix}
1 & 0 & 0 & 0 \\
0 & 0 & 1 & 0\\
0 & 1 & 0 & 0\\
0 & 0 & 0 & 1
\end{pmatrix},\quad 
R_1=
\begin{pmatrix}
0 & 1 & 0 & 0 \\
1 & 0 & 0 & 0\\
0 & 0 & 0 & 1\\
0 & 0 & 1 & 0
\end{pmatrix}
\quad\textnormal{and}\quad
M_{\textnormal{RSL}}^{}=
\begin{pmatrix}
1 & 1 & 0 & 0 \\
1 & 0 & 1 & 0\\
0 & 1 & 0 & 1\\
0 & 0 & 1 & 1
\end{pmatrix}.
\]
As $M_{\textnormal{RSL}}^{3}\gg 0$, the substitution is primitive.
The third iterate of the seed $0$ is $01201301$, which shows that the
letter $0$ can be preceded by $2$ or by $3$, and
that the letter $1$ can be followed by either $2$ or by $3$. Hence
both $0$ and $1$ have two distinct neighbourhoods and, by Pansiot's
Lemma, the sequence is aperiodic.

In accordance with the Perron--Frobenius theorem, we find
$\la_{\textnormal{PF}}=2$ and $u=\frac{1}{4}(1,1,1,1)$ for the
eigenvalue and statistically normalized eigenvector of
$M_{\textnormal{RSL}}^{}$. By applying Theorem~\ref{thm:AB1}, we obtain
$\widehat{\Si}(0)=\frac{1}{4}\sum_{\al\in\mathcal{A}}e_{\al\al}$. As
we are dealing with a length two substitution, we have
$\Delta_{1}(1)=\{1\}$. Using Equation~\eqref{equation:1}, we find that 
\[
  \widehat{\Si}(1)\, =\, 
  \left(0,\frac{1}{6},0,\frac{1}{12},0,0,\frac{1}{12},
  \frac{1}{6},\frac{1}{6},\frac{1}{12},0,0,\frac{1}{12},0,
   \frac{1}{6},0\right).
\]
We then proceed to compute $\widehat{\Si}(k)$ for any $k\geq 2$.

By using Proposition~\ref{prop:AB2}, we calculate the ergodic
decomposition of the bi-substitution $S_{\textnormal{RSL}^{}}\otimes
S_{\textnormal{RSL}^{}}$ to obtain
\[  E_1=\{00,11,22,33\}, \quad 
    E_2=\{03,12,21,30\}, \quad 
    E_3=\{01,02,10,13,20,23,31,32\}
\] 
as the ergodic classes. In our case, the transient part turns out to
be empty. Note that $E_1$ and $E_2$ contain exactly the same elements
as the two corresponding ergodic classes of the Rudin--Shapiro
sequence.

Using Lemma~\ref{lem:AB3}, and taking into account that we have an
empty transient part $P_T^{}=0$, it follows that
\[
   v\, =\,\left(\begin{matrix}
   w_1 & w_3 & w_3 & w_2 \\
   w_3 & w_1 & w_2 & w_3\\
   w_3 & w_2 & w_1 & w_3\\
   w_2 & w_3 & w_3 & w_1
   \end{matrix} \right).
\]
We then diagonalize the matrix $v$,  
\[
  v_d\, =\, \left(\begin{matrix}
   w_2+w_1+2w_3 & 0 & 0 & 0\\
   0 & w_2+w_1-2w_3 & 0 & 0\\
   0 & 0 & -w_2+w_1 & 0\\
   0 & 0 & 0 & -w_2+w_1
   \end{matrix}\right).
\]
Setting $w_1=1$, strong semi-positivity is equivalent
to $w_{2}$ and $w_{3}$ satisfying the following three inequalities,
\[
  1-w_2\geq 0,\quad
   1+w_2+2w_3\geq 0,\quad
   1+w_2-2w_3\geq 0.
\]
The extreme points are given by the solutions $(w_1,w_2,w_3)=(1,1,1)$,
$(w_1,w_2,w_3)=(1,1,-1)$ or $(w_1,w_2,w_3)=(1,-1,0)$. Thus, the
extremal rays are
\begin{align*}
  v_1=&(1,1,1,1,1,1,1,1,1,1,1,1,1,1,1,1), \\ 
  v_2=&(1,-1,-1,1,-1,1,1,-1,-1,1,1,-1,1,-1,-1,1)\mbox { and }\\
  v_3=&(1,0,0,-1,0,1,-1,0,0,-1,1,0,-1,0,0,1).
\end{align*}
As usual, $\la_{v_1}=\delta_{0}$ which gives rise to the pure point
component, via Equation~\eqref{eq:mst}. Using the previously computed
values of $\widehat{\Si}(k)$, one checks that $\widehat{\la_{v_2}}(k)$
and $\widehat{\la_{v_3}}(k)$ do not vanish at all positions $k\neq 0$,
which proves that there are no absolutely continuous components. One
can then easily verify that the substitution system is of trivial
height, therefore the pure point component is entirely supported by
the Dirac measure $\delta_{0}$. The other two measures are neither
absolutely continuous nor show the necessary periodicity to contribute
to the pure point part. By Dekking's theorem~\cite[Thm.~5.6]{AB14},
we thus conclude that the other two measures have to be singular
continuous. Thus, we have a purely singular continuous diffraction
spectrum in the balanced weight case (in which the pure point
component is extinguished).
\end{proof}

If we assumed that the Rudin--Shapiro-like sequence satisfied the
inequality \eqref{eq:ineq}, it would imply that the diffraction
spectrum was absolutely continuous, as a consequence of the following
result \cite[Prop.~4.9]{MQ10}.

\begin{prop}
If $\sigma$ is the unique correlation measure of the sequence
$\gamma$, $\sigma$ is the weak-$*$ limit point of the sequence of
absolute continuous measures $R_N\cdot m$, where $m$ is the Haar
measure and $R_N=\frac{1}{N}\left|\sum_{n<N}\gamma(n)e^{2\pi
  in\theta}\right|^{2}$,
\end{prop}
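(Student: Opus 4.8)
The plan is to compute the Fourier coefficients of the absolutely continuous measures $\mu_N:=R_N\cdot m$ and to show that they converge to those of $\sigma$; weak-$*$ convergence then follows by a density argument. Expanding the square,
\[
  R_N(\theta)\,=\,\frac{1}{N}\sum_{0\le n,l<N}\gamma(n)\,\overline{\gamma(l)}\;
  e^{2\pi i(n-l)\theta},
\]
so integrating against $e^{-2\pi ik\theta}$ with respect to the normalised Haar measure $m$ on the torus and using orthonormality of the characters, only the terms with $n-l=k$ survive and
\[
  \widehat{\mu_N}(k)\,=\,\frac{1}{N}\sum_{\substack{0\le l<N\\ 0\le l+k<N}}
  \gamma(l+k)\,\overline{\gamma(l)}.
\]
This is the truncated autocorrelation of $\gamma$ at lag $k$; the at most $|k|$ missing boundary terms are each bounded by $1$ in modulus and so contribute $O(|k|/N)$, which is harmless in the limit.

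Next I would pass to the limit. Since $|\gamma(n)|=1$, every $\mu_N$ is a positive measure with $\mu_N(\mathbb{T})=\widehat{\mu_N}(0)=\frac{1}{N}\sum_{n<N}|\gamma(n)|^{2}=1$, so $(\mu_N)_N$ is a sequence of probability measures on the compact torus and hence, by the Banach--Alaoglu theorem, has weak-$*$ limit points, each of which is again a probability measure. Let $\mu$ be such a limit point, say $\mu_{N_j}\to\mu$. For each fixed $k$ the character $\theta\mapsto e^{-2\pi ik\theta}$ is continuous, so $\widehat{\mu}(k)=\lim_{j}\widehat{\mu_{N_j}}(k)$. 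By hypothesis $\gamma$ possesses a \emph{unique} correlation measure $\sigma$; this is precisely the assertion that the autocorrelation limits $\lim_{N\to\infty}\frac{1}{N}\sum_{l}\gamma(l+k)\,\overline{\gamma(l)}=\widehat{\sigma}(k)$ exist for every $k$, independently of the chosen subsequence. Combining this with the preceding computation, $\widehat{\mu}(k)=\widehat{\sigma}(k)$ for all $k\in\Z$, and since a finite measure on the torus is determined by its Fourier coefficients, $\mu=\sigma$. Hence every weak-$*$ limit point of $(\mu_N)_N$ equals $\sigma$; in particular $\sigma$ is such a limit point, as claimed.

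The only step requiring a little care — and the main, though mild, obstacle — is to see that coefficientwise convergence of the uniformly bounded measures $\mu_N$ to $\sigma$ already forces $\mu_N\to\sigma$ in the weak-$*$ sense, rather than merely along subsequences. This is the standard consequence of the density of trigonometric polynomials in $C(\mathbb{T})$ (Stone--Weierstrass): given $f\in C(\mathbb{T})$ and $\varepsilon>0$, pick a trigonometric polynomial $P$ with $\|f-P\|_\infty<\varepsilon$; then
\[
  \Bigl|\int f\,d\mu_N-\int f\,d\sigma\Bigr|
  \,\le\,\varepsilon\bigl(\mu_N(\mathbb{T})+\sigma(\mathbb{T})\bigr)
  +\Bigl|\int P\,d\mu_N-\int P\,d\sigma\Bigr|,
\]
and the last term is a finite combination of differences $\widehat{\mu_N}(k)-\widehat{\sigma}(k)$, which tends to $0$. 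The uniform mass bound $\mu_N(\mathbb{T})=1$ obtained above is exactly what makes the first term small uniformly in $N$, and this closes the argument.
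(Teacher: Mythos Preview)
Your argument is correct and is the standard proof of this fact. Note, however, that the paper does not actually supply a proof of this proposition: it is quoted verbatim as \cite[Prop.~4.9]{MQ10} and used as a black box, with the subsequent paragraph deriving the absolute-continuity consequence from the inequality~\eqref{eq:ineq} rather than proving the proposition itself. Your write-up therefore fills in what the paper leaves to the reference; the computation of $\widehat{\mu_N}(k)$ as the truncated autocorrelation, the compactness via Banach--Alaoglu, and the upgrade from subsequential to full convergence by Stone--Weierstrass are exactly the ingredients one finds in Queff\'{e}lec's treatment.
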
 

Let us denote $\zeta_N=R_N\cdot m$ and suppose weak convergence to a
limit $\zeta$. Assuming that Equation~\eqref{eq:ineq} holds, it
follows that $\zeta(g)\leq C\int g\ dm$, which implies absolute
continuity. Hence, it follows from the singular diffraction that the
inequality \eqref{eq:ineq} does \emph{not} hold for the
Rudin--Shapiro-like sequence.

\section{Comparison with the Rudin--Shapiro sequence}

Let us close with a brief comparison with the Rudin--Shapiro sequence.
The following result about the Rudin--Shapiro sequence is well known;
see \cite[Ch.~10.2]{BG13} and references therein for background and details.
\begin{prop}
  The Rudin--Shapiro sequence (with balanced weights) has purely
  absolute continuous diffraction spectrum.
\end{prop}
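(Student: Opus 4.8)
Since the result is classical (see \cite{AL91} and \cite[Ch.~10.2]{BG13}), the plan is to indicate two routes to it; the second mirrors the computation carried out above and exhibits exactly where the Rudin--Shapiro sequence parts ways with the Rudin--Shapiro-like one. The short route verifies the bound \eqref{eq:ineq} for the genuine Rudin--Shapiro sequence $\gamma$ and then repeats, verbatim, the argument given after the previous theorem. To get the bound I would introduce the Rudin--Shapiro polynomials $P_n,Q_n$ with $P_0=Q_0=1$ and $P_{n+1}(z)=P_n(z)+z^{2^{n}}Q_n(z)$, $Q_{n+1}(z)=P_n(z)-z^{2^{n}}Q_n(z)$, whose coefficient strings are the length-$2^{n}$ blocks of $\gamma$ and which satisfy $|P_n(z)|^{2}+|Q_n(z)|^{2}=2^{n+1}$ on $|z|=1$; expanding $N$ in base $2$ and summing over the dyadic blocks yields $\sup_{\theta\in\R}\bigl|\sum_{n<N}\gamma(n)\,e^{2\pi in\theta}\bigr|\le C N^{1/2}$. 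By \cite[Prop.~4.9]{MQ10}, the correlation measure $\sigma$ is then a weak-$*$ limit point of the absolutely continuous measures $R_N\cdot m$ with $R_N=\frac1N\bigl|\sum_{n<N}\gamma(n)e^{2\pi in\theta}\bigr|^{2}\le C^{2}$ uniformly, so $\sigma(g)\le C^{2}\!\int g\,\mathrm{d}m$ for every non-negative continuous $g$; hence $\sigma\le C^{2}m$, so $\sigma\ll m$, i.e.\ the diffraction is purely absolutely continuous.

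The second route runs Bartlett's algorithm on the standard four-letter presentation $0\mapsto01$, $1\mapsto02$, $2\mapsto31$, $3\mapsto32$ of the Rudin--Shapiro substitution, with the same recoding $0,1\mapsto+1$ and $2,3\mapsto-1$. Its substitution matrix is again $M_{\textnormal{RSL}}^{}$, so primitivity, aperiodicity via Pansiot's Lemma, the Perron--Frobenius data $\la_{\textnormal{PF}}=2$, $u=\tfrac14(1,1,1,1)$, and hence $\widehat{\Si}(0)=\tfrac14\sum_{\al\in\mathcal{A}}e_{\al\al}$, come out exactly as before; only the instruction matrices change, to
\[
  R_0=\begin{pmatrix}1&1&0&0\\0&0&0&0\\0&0&0&0\\0&0&1&1\end{pmatrix},\qquad
  R_1=\begin{pmatrix}0&0&0&0\\1&0&1&0\\0&1&0&1\\0&0&0&0\end{pmatrix},
\]
which, in contrast with those of $S^{}_{\text{RSL}^{}}$, are not permutation matrices. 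One then computes $\widehat{\Si}(1)$ from \eqref{equation:1} (again $\Delta_1(1)=\{1\}$) and the remaining $\widehat{\Si}(k)$ from Theorem~\ref{thm:AB1}. Proposition~\ref{prop:AB2} now gives the ergodic classes $E_1=\{00,11,22,33\}$ and $E_2=\{03,12,21,30\}$ together with an eight-element transient part $T=\{01,02,10,13,20,23,31,32\}$ --- so, unlike the Rudin--Shapiro-like case, the ``off-diagonal'' pairs are transient rather than ergodic. Lemma~\ref{lem:AB3} then produces vectors $v=w_1\vec{E}_1+w_2\vec{E}_2+P_T(2I-P_TC_S^{t})^{-1}P_TC_S^{t}(w_1\vec{E}_1+w_2\vec{E}_2)$, and imposing $v\geq0$ (normalising $w_1=1$) leaves a one-parameter interval whose two endpoints give the extreme rays: the all-ones ray, with $\la_{v}=\delta_0$ as usual, and one further ray $v_{\ast}$. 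The crux is that, evaluating $\widehat{\la_{v_{\ast}}}(k)=v_{\ast}\,\widehat{\Si}(k)$, one finds $\widehat{\la_{v_{\ast}}}(k)=0$ for all $k\neq0$, so $\la_{v_{\ast}}$ is Lebesgue measure. The height being trivial and the $\delta_0$-component being extinguished under the balanced weights, \eqref{eq:mst} then yields a purely absolutely continuous diffraction spectrum.

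The main obstacle, shared by both routes, is upgrading a finite computation to a statement uniform in $k$ (equivalently, in $\theta$). In the analytic route this is the dyadic-block estimate, which rests entirely on the identity $|P_n|^{2}+|Q_n|^{2}=2^{n+1}$ --- the classical Rudin--Shapiro phenomenon --- and needs nothing further. In the algorithmic route one must show that $\widehat{\la_{v_{\ast}}}(k)=0$ persists for every $k\geq1$; the efficient way is to use the $2$-adic self-similarity supplied by Theorem~\ref{thm:AB1} with $p=1$, which expresses $\widehat{\Si}(2k)$ and $\widehat{\Si}(2k+1)$ through $\widehat{\Si}(k)$ and $\widehat{\Si}(k+1)$ via the Kronecker products $R_j\otimes R_{j+k}$, and then to induct on the binary length of $k$: the even case $v_{\ast}\,\widehat{\Si}(2k)=v_{\ast}\,\widehat{\Si}(k)$ is immediate from $C_S^{t}v_{\ast}=2v_{\ast}$, while the odd case needs the genuine identity relating $v_{\ast}(R_0\otimes R_1)$ and $v_{\ast}(R_1\otimes R_0)$ to the correlation covectors --- and it is precisely this identity that fails for $S^{}_{\text{RSL}^{}}$, producing there the oscillatory, non-vanishing $\widehat{\la_{v_2}},\widehat{\la_{v_3}}$ instead. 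Everything else --- the inversions in \eqref{equation:1} and Lemma~\ref{lem:AB3}, the determination of $v_{\ast}$, and the triviality of the height --- is routine and parallels the $S^{}_{\text{RSL}^{}}$ computation above.
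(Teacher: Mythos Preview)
Your proposal is correct, and in fact it supplies far more than the paper does: the paper offers no proof of this proposition at all, treating it as a well-known result and simply pointing the reader to \cite[Ch.~10.2]{BG13} for the classical argument and to \cite[Ex.~5.8]{AB14} for the verification via Bartlett's algorithm. Your Route~1 is exactly the classical analytic argument behind the first reference (the parallelogram identity $|P_n|^2+|Q_n|^2=2^{n+1}$, the dyadic-block bound, and the domination $\sigma\le C^2 m$ via \cite[Prop.~4.9]{MQ10}), while your Route~2 is a faithful sketch of Bartlett's Example~5.8; in particular, your instruction matrices for the standard Rudin--Shapiro substitution are correct, and your identification of $T=\{01,02,10,13,20,23,31,32\}$ as transient (rather than a third ergodic class) matches precisely what the paper highlights in its comparison section as the structural point where the two systems diverge. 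The observation that only two extreme rays arise (since, after normalising $w_1=1$, one is left with a single parameter $w_2$ constrained to an interval by strong semi-positivity) and that the non-trivial one yields $\widehat{\la_{v_\ast}}(k)=0$ for all $k\ne 0$ is the heart of Bartlett's computation. One small caution: your inductive sketch for the vanishing of $\widehat{\la_{v_\ast}}(k)$ is morally right but would need the explicit verification of the odd-step identity (which is where the RS-specific cancellation lives) to be a complete proof; this is the one place where a reader would want to see the actual matrix products written out, as in \cite[Ex.~5.8]{AB14}.
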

We refer the readers to~\cite[Ex.~5.8]{AB14} to see how Bartlett's
algorithm can be employed to show the above result.

Both the RS sequence and the RSL sequence are based on (four-letter)
substitutions of constant length $q=2$ (and a subsequent reduction to
a balanced two-letter sequence), and superficially looks quite
similar, including sharing the behaviour of partial sums that we
mentioned earlier. The ergodic classes $E_1$ and $E_2$ of both
substitutions contain exactly the same elements. The elements that
form the transient part of the Rudin--Shapiro sequence are exactly the
same elements that form the third ergodic class of the
Rudin--Shapiro-like sequence. However, the values obtained from the Fourier
transform of the correlation measures differ between these two
systems.  Hence, we have two structurally different systems that
exhibit a similar arithmetic structure.

Bartlett's algorithm indicates that it may be quite difficult to
construct substitution-based sequences with absolutely continuous
diffraction spectrum, because it requires $\widehat{\la_{v}}(k)$ to
vanish for \emph{all} $k\ne 0$ for one of the extremal rays.
Intuitively, this is the case because any non-trivial correlation will
give rise to long-range correlations due to the built-in
self-similarity of the substitution-based sequence. Generically, this
property will not be fulfilled, so one should expect singular
continuous spectra to dominate, which is indeed what is observed. A
notable exception is provided by substitution sequences based on
Hadamard matrices \cite{Fra03}.

\ack The authors would like to thank Michael Baake and Ian Short for
many helpful discussions and comments on improving this paper, to Alan
Bartlett on explaining his paper and Jean-Paul Allouche for sharing
his preprint~\cite{JA16}. The first author is supported by the Open
University PhD studentship.

\end{document}